\newtheorem{proposition}{Proposition}[section]
\newtheorem{definition}{Definition}[section]
\newtheorem{lemma}{Lemma}[section]
\newtheorem{theorem}{Theorem}[section]
\theoremstyle{definition}
\newtheorem{remark}{Remark}[section]
\DeclareMathOperator{\vol}{vol}
\DeclareMathOperator{\diag}{diag}
\title[A Cheeger Inequality based on a reflection principle]{A Cheeger
inequality for graphs based on a \\ reflection principle}
\author[Gelernt]{Edward Gelernt}
\address{Department of Mathematics, Yale University, New Haven,
CT 06511, USA} 
\author[Halikias]{Diana Halikias}
\address{Department of Mathematics, Yale University, New Haven,
CT 06511, USA} 
\author[Kenney]{Charles Kenney}
\address{Department of Mathematics, Rutgers University,
Piscataway, NJ 08854, USA}
\author[Marshall]{Nicholas F.  Marshall}
\address{Department of Mathematics, Princeton University,
Princeton, NJ 08542, USA}
\keywords{Cheeger inequality, graph Laplacian, Neumann Laplacian}
\subjclass[2010]{05C50, 05C85 (primary) and 15A42 (secondary) }
\begin{document}

\begin{abstract}
Given a graph with a designated set of boundary vertices, we define a new notion
of a Neumann Laplace operator on a graph using a reflection principle.  We show
that the first eigenvalue of this Neumann graph Laplacian satisfies a Cheeger
inequality.
\end{abstract}

\maketitle

\section{Introduction and Main Result}
\subsection{Introduction} \label{intro}
Suppose that $G = (V,E)$ is a graph with vertices $V$ and edges $E$. 
Let $\partial V \subseteq V$ be a designated set of boundary vertices, and
$\mathring{V} := V \setminus \partial V$.
We define the doubled graph $G'$ as
follows. Let $\mathring{G} =(U,F)$ be an isomorphic copy of the induced subgraph
$G[\mathring{V}]$, and let $f$ be an isomorphism from $\mathring{V}$ to $U$. Set
$$
F' := \left\{ \{u,v\} : u \in U, v \in \partial V, \{f^{-1}(u),v\}
\in E \right\}.
$$
Then, we define $G' := (V',E')$ where $V' := V \cup U$ and $E' := E \cup F \cup
F'$. That is to say, $G'$ is defined by making an isomorphic copy of the
interior of $G$ and attaching it to the boundary vertices $\partial V$ as in the
original graph, see Figure \ref{fig01}.

\begin{figure}[h!]
\centering
\includegraphics[width= .8\textwidth]{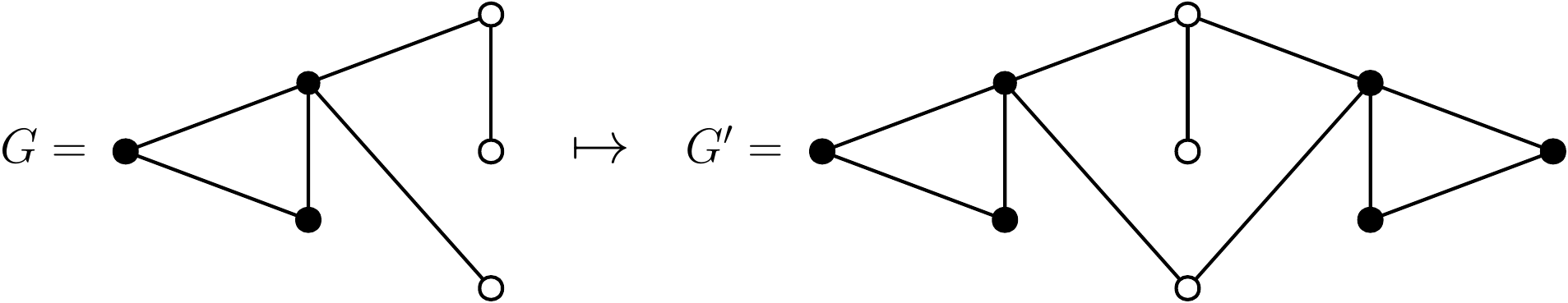}
\caption{A graph $G$, and its doubled graph $G'$, where the black and white dots
denote interior and boundary vertices, respectively.}
\label{fig01}
\end{figure}

\begin{definition}
Let $G'=(V',E')$ be a doubled graph, and let $f:\mathring{V} \to U$ be an isomorphism as above, 
so that for all $w \in \partial V$
and $v \in \mathring{V}, \ \{v,w\} \in E' \iff \{f(v),w\} \in E'.$
We say that a function $\varphi : V' \rightarrow \mathbb{R}$ is even with
respect to $\partial V$ if
$$
\varphi(v) = \varphi(f(v)) \text{ for } v \in \mathring{V},
$$ 
and we say that $\varphi$ is odd with respect to $\partial V$ if
$$
\varphi(v) = -\varphi(f(v)) \text{ for } v \in \mathring{V}, \quad \text{and} \quad
\varphi(v) = 0 \text{ for } v \in \partial V.
$$ 
\end{definition}

Let $L' := D - A$ denote the graph Laplacian of $G'$ where $D$ is the degree
matrix of $G'$, and $A$ is the adjacency matrix of $G'$. The following
proposition characterizes the eigenvectors of $L'$ as
either even or odd.

\begin{proposition} \label{prop1}
The graph Laplacian $L'$ has $|V|$ eigenvectors that are even with respect to
$\partial V$, and $|\mathring{V}|$ eigenvectors that are odd with respect to
$\partial V$; this accounts for all eigenvectors of $L'$.
\end{proposition}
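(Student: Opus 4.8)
The plan is to realize the even/odd decomposition as the $\pm 1$ eigenspaces of the permutation matrix associated to the reflection automorphism of $G'$. Define the involution $R \colon V' \to V'$ that swaps each interior vertex with its copy, $R(v) = f(v)$ and $R(f(v)) = v$ for $v \in \mathring{V}$, and fixes every boundary vertex, $R(w) = w$ for $w \in \partial V$; note $R^2 = \mathrm{id}$. Let $P$ be the associated permutation matrix, acting on functions by $(P\varphi)(x) = \varphi(R(x))$, so that $P^2 = I$ and $P = P^{\mathsf T}$. The crux of the argument is to show that $R$ is a graph automorphism of $G'$, for then $P$ commutes with both the adjacency matrix $A$ and the degree matrix $D$ of $G'$, hence with $L' = D - A$.

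To verify that $R$ preserves $E'$, I would check edge-preservation separately on the three parts $E$, $F$, and $F'$. Edges inside $\partial V$ are fixed, and edges inside $\mathring{V}$ are carried to the corresponding edges of $F$ (and conversely) because $f$ is a graph isomorphism from $G[\mathring{V}]$ to $\mathring{G}$. The only place any real care is needed --- and the main obstacle --- is the cross edges, where the definition of $F'$ is exactly what makes the matching work: an edge $\{v,w\}$ with $v \in \mathring{V}$, $w \in \partial V$ is sent by $R$ to $\{f(v),w\}$, which lies in $F'$ precisely because $\{f^{-1}(f(v)),w\} = \{v,w\} \in E$; conversely, each edge of $F'$ is sent back to the corresponding edge of $E$. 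Since $R$ is a bijection of $V'$ mapping $E'$ onto $E'$, it is an automorphism, so it preserves degrees, giving $PD = DP$; combined with $PA = AP$ this yields $PL' = L'P$.

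Finally I would read off the statement from linear algebra. As $P$ is a symmetric involution, $\mathbb{R}^{V'}$ decomposes orthogonally into its $+1$ and $-1$ eigenspaces. Unwinding the definitions, $P\varphi = \varphi$ says exactly $\varphi(v) = \varphi(f(v))$ on $\mathring{V}$, i.e.\ $\varphi$ is even, while $P\varphi = -\varphi$ says $\varphi(v) = -\varphi(f(v))$ on $\mathring{V}$ together with $\varphi(w) = -\varphi(w)$, that is $\varphi(w) = 0$, on $\partial V$, i.e.\ $\varphi$ is odd. An even function is determined freely by its values on $V = \partial V \cup \mathring{V}$, so the even subspace has dimension $|V|$, while an odd function is determined freely by its values on $\mathring{V}$, giving dimension $|\mathring{V}|$; these sum to $|V'|$. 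Because $L'$ commutes with $P$ it preserves each eigenspace, and since $L'$ is symmetric its restriction to each subspace admits an orthonormal eigenbasis lying in that subspace. This produces $|V|$ even eigenvectors and $|\mathring{V}|$ odd eigenvectors which together form a complete eigenbasis of $L'$, as claimed.
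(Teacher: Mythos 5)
Your proof is correct, and it takes a genuinely different route from the paper's. You exploit the $Z_2$ symmetry of the doubled graph: you verify that the reflection swapping $\mathring{V}$ with $U$ is an automorphism of $G'$ (your check of the cross edges, where the definition of $F'$ is used in both directions, is exactly the point that needs care, and you handle it correctly), conclude that the associated permutation matrix $P$ commutes with $L' = D - A$ since automorphisms preserve adjacency and degrees, and then simultaneously diagonalize: the $+1$ and $-1$ eigenspaces of the symmetric involution $P$ are precisely the even and odd functions, of dimensions $|V|$ and $|\mathring{V}|$, and each is $L'$-invariant, so the symmetric operator $L'$ admits an eigenbasis adapted to the splitting. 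The paper instead writes $L'$ in $3\times 3$ block form with blocks $X = L'(\mathring{V},\mathring{V})$, $Y = L'(\mathring{V},\partial V)$, $Z = L'(\partial V, \partial V)$ and computes directly: an even vector $(u,v,u)$ is an eigenvector of $L'$ if and only if $(u,v)$ is an eigenvector of $L_R = \left( \begin{smallmatrix} X & Y \\ 2Y^\top & Z \end{smallmatrix} \right)$, and an odd vector $(u,0,-u)$ if and only if $Xu = \lambda u$; completeness then follows because $L_R$ is similar to a symmetric matrix (via conjugation by $\diag(I, \sqrt{2}\, I)$) and $X$ is symmetric. Your argument is cleaner and more conceptual: it gets completeness and the even/odd dichotomy from general principles rather than from the somewhat ad hoc similarity transform, it yields orthogonality of the two families for free, and it would generalize immediately to other covering symmetries (such as the $n$-fold reflection raised in the paper's future directions). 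What it does not do is exhibit the reduced operators: the paper's block computation is what actually \emph{produces} the matrices $L_D = X$ and, crucially, $L_R$ with its factor of $2$ on the off-diagonal block --- the reflected Neumann Laplacian that is the object of study in the remainder of the paper --- so the paper's proof doubles as the derivation of the central definition \eqref{defLR}, whereas yours establishes the proposition without naming those operators. One cosmetic point: your symbol $R$ for the involution collides with the paper's later use of $R$ for the reflected adjacency matrix, so it should be renamed if merged into the text.
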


\subsection{Motivation}
We are motivated by the observation that the restriction of the odd and even
eigenvectors of $L'$ to the graph $G$ seem like natural Dirichlet and
Neumann Laplacian eigenvectors for the graph $G$, given the respective odd and
even behavior of Dirichlet and Neumann Laplacian eigenfunctions on manifolds. In
fact, the restriction of the odd eigenvectors of $L'$ to the graph $G$ are
eigenvectors of the Dirichlet graph Laplacian defined by Chung in
\cite{Chung1997}, and inequalities involving the eigenvalues of this operator
have been investigated \cite{ChungOden2000}.  However, an operator corresponding
to the restriction of the even eigenvectors of $L'$ to $G$ has not, to our
knowledge been investigated.  In \cite{Chung1997}, Chung defines the Neumann
graph Laplacian by enforcing a condition that a discrete derivative vanishes on
the boundary nodes of the graph, which results in different eigenvectors than
those arising from the even eigenvectors of $L'$. We note that a Cheeger
inequality for Chung's definition of the Neumann graph Laplacian has recently
been established by Hua and Huang \cite{HuaHuang2018}. 

\subsection{Odd and even eigenvectors}
The proof of Proposition \ref{prop1} gives some initial insight into the odd
and even eigenvectors the graph Laplacian $L'$ on the doubled graph $G'$.

\begin{proof}[Proof of Proposition \ref{prop1}]
The proof of this proposition is immediate from the block structure of the graph
Laplacian $L'$. Indeed, let $L'(U,W)$ denote the submatrix of $L'$ whose rows
and columns are indexed by $U \subseteq V$ and $W \subseteq V$, respectively. We
can write
$$
L' =
\left(\begin{array}{ccc}
X & Y & 0 \\
Y^\top & Z & Y^\top \\
0 & Y & X 
 \end{array}\right),
$$
where $X$ is the submatrix $L'(\mathring{V},\mathring{V})$, $Y$ is the submatrix
$L'(\mathring{V},\partial V)$, and $Z$ is the submatrix $L'(\partial V,\partial V)$.
With this notation, the eigenvectors of $L'$ that are even with respect to
$\partial V$ are solutions to the equation
$$
\left(\begin{array}{ccc}
X & Y & 0 \\
Y^\top & Z & Y^\top \\
0 & Y & X 
 \end{array}\right)
\left( \begin{array}{c}
u\\
v\\
u
\end{array} \right)
 =
\mu
\left(
\begin{array}{c}
u\\
v\\
u
\end{array} \right).
$$
That is to say, the vectors $u$ and $v$ satisfy $Xu + Yv = \mu u$ and
$2Y^\top u + Zv = \mu v$. Put differently, when concatenated, $u$ and $v$ form
an eigenvector of the matrix
\begin{equation} \label{defLR}
L_R :=
\begin{pmatrix}
X & Y\\
2Y^\top & Z
\end{pmatrix}.
\end{equation}
Observe that $L_R$ is similar to a symmetric matrix
$$
L_R = \begin{pmatrix} I & 0\\ 0 & \sqrt{2}I \end{pmatrix}
\begin{pmatrix} X & \sqrt{2}Y\\
\sqrt{2} Y^\top & Z \end{pmatrix}
\begin{pmatrix} I & 0\\ 0 & \sqrt{2}I \end{pmatrix}^{-1},
$$
and thus by the Spectral Theorem, $L_R$ has $|V|$ real eigenvectors, which give
rise to $|V|$ even eigenvectors of $L'$.  The eigenvectors of $L'$ that are
odd with respect to $\partial V$ are solutions to the equation $$
\left(\begin{array}{ccc}
X & Y & 0 \\
Y^\top & Z & Y^\top \\
0 & Y& X 
 \end{array}\right)
\left(
\begin{array}{c}
u\\
0\\
-u
\end{array} \right)
 =
\lambda
\left( \begin{array}{c}
u \\
0\\
-u
\end{array} \right).
$$
Thus, each vector $u$ such that $X u = \lambda u$ gives rise to an odd
eigenvector of $L'$. Let
$$
L_D := X.
$$
Since $L_D$ is symmetric, it follows from the Spectral Theorem that it has
$|\mathring{V}|$ real eigenvectors, and we conclude that $L'$ has
$|\mathring{V}|$ odd eigenvectors.  \end{proof}

\subsection{Contribution}
In this paper, we study the operator $L_R$ defined in \eqref{defLR} which we
call the reflected Neumann graph Laplacian. This operator seems to be
particularly natural on graphs approximating manifolds. For example, in Remark
\ref{rmk1}, we show that on the path graph, the eigenvectors of the Dirichlet
graph Laplacian $L_D$ and reflected Neumann graph Laplacian $L_R$ are the
familiar discrete sine and cosine functions. We remark that the definition of
the reflected Neumann graph Laplacian $L_R$ has some similarities to the
normalization used in the diffusion maps manifold learning method of Coifman and
Lafon \cite{CoifmanLafon2006}.

Our main result Theorem \ref{thm1} shows that the first
eigenvalue of the normalized reflected Neumann graph Laplacian $\mathcal{L}_R$
defined in \eqref{nlr} satisfies a Cheeger inequality.  
The graph cuts arising from $\mathcal{L}_R$ can differ significantly from graph
cuts arising from the standard normalized graph Laplacian $\mathcal{L}$ defined
in \cite{Chung1997}. In Figure \ref{fig04}, we illustrate Theorem \ref{thm1}
with an example where the first eigenvector of the Neumann graph Laplacian $L_R$
suggests a drastically different cut than the first eigenvector of the standard
graph Laplacian, and describe how the graph cut suggested by $L_R$ is consistent
with the Cheeger inequality established in Theorem \ref{thm1}. It may be
interesting to investigate the analog of other classical eigenvalue inequalities
involving these definitions of $L_D$ and $L_R$ for graphs with boundary.

\begin{remark} \label{rmk1}
The operators $L_D$ and $L_R$ are particularly natural on the path graph.
Let $P_n =(V,E)$ denote the path graph on $n$ vertices, where $V = \{1, \ldots,
n\}$ and $\{u,v\} \in E$  if and only if $|u - v| = 1$. If $\partial V :=
\{1,n\}$, then the doubled graph $P_n' = C_{2n-2}$ is the cycle graph on $2n -2$ vertices,
see Figure \ref{fig02}.

\begin{figure}[h!]
\centering
\begin{tabular}{@{}p{.25\textwidth}p{.15\textwidth}p{.25\textwidth}}
\raisebox{-.5\height}{\includegraphics[width=
.27\textwidth]{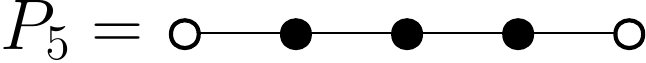}} &
$\qquad \mapsto$ &
\raisebox{-.5\height}{\includegraphics[width=
.27\textwidth]{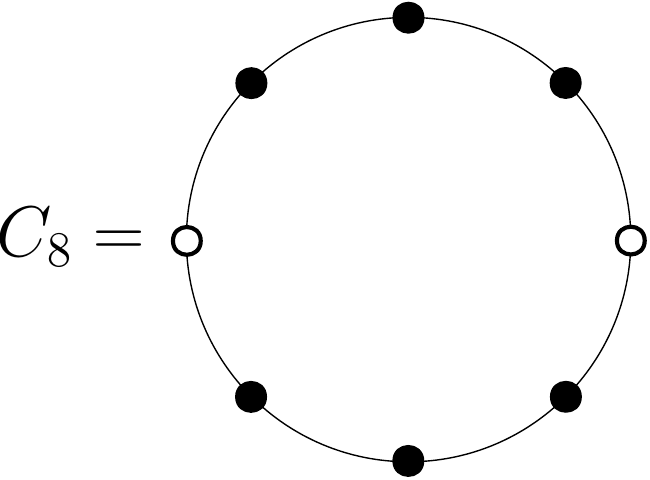}}
\end{tabular}
\caption{A path graph and its doubled graph.} \label{fig02}
\end{figure}

Consider $L_D$ and $L_R$ of the path
graph $P_n$.  The Dirichlet 
eigenvectors $\varphi_k$ and eigenvalues $\lambda_k$, which satisfy $L_D
\varphi_k = \lambda_k \varphi_k$ for $k = 1,\ldots,n-2$, are of the form 
$$
\lambda_k := 2 \left(1-\cos \left(\frac{\pi k}{n-1} \right) \right) \quad
\text{and} \quad
\varphi_k(j) = \sin \left(\frac{\pi j k}{n-1} \right),
$$
for $j=1,\ldots,n-2$, while the Neumann 
eigenvectors, $\psi_k$ and $\mu_k$, which satisfy $L_R \psi_k = \mu_k \psi_k$
for $k=0,\ldots,n-1$, are of the form
$$
\mu_k := 2 \left(1-\cos \left(\frac{\pi k}{n-1} \right) \right) \quad
\text{and} \quad
\psi_k(j) := \cos \left(\frac{\pi j k}{n-1} \right)
$$
for $j = 0,\ldots,n-1$. Thus, the path graph  doubling procedure defined in
\S \ref{intro} gives the familiar sine and cosine functions, which are the
Dirichlet and Neumann eigenfunctions of the Laplace operator of the unit
interval.
\end{remark}

\subsection{Notation and definitions}

Suppose that $G = (V,E)$ is a graph with vertices $V$ and edges $E$. Let
$\partial V \subseteq V$ be a designated set of boundary vertices, and set
$\mathring{V} = V \setminus \partial V$. We can write the adjacency matrix $A$ of the
graph $G$ as the block matrix
$$
A = \left( \begin{array}{cc}
A_{11} & A_{12} \\
A_{12}^\top & A_{22} 
\end{array}
\right),
$$
where $A_{11} = A(\mathring{V},\mathring{V})$, $A_{12} = A(\mathring{V},\partial
V)$, and $A_{22} = A(\partial V,\partial V)$. Motivated by Proposition
\ref{prop1} we define the reflected adjacency matrix $R$ by
$$
R := \left( \begin{array}{cc} A_{11} & A_{12} \\
2 A_{12}^\top & A_{22} 
\end{array}
\right).
$$
With this notation, the reflected Neumann Laplacian $L_R$ can be defined by
$$
L_R = D - R,
$$
where $D = \diag(R \vec{1})$, where $\vec{1}$ denotes a vector whose entries are
all $1$, and whose dimensions are such that the matrix-vector multiplication is
well defined. We define the normalized reflected
Neumann graph Laplacian $\mathcal{L}_R$ by
\begin{equation} \label{nlr}
\mathcal{L}_R := D^{-1/2} L_R D^{-1/2}.
\end{equation}

\subsection{Main result} \label{mainresult}
In this section, we present our main result Theorem \ref{thm1}.  While the
matrix $\mathcal{L}_R$ is not in general symmetric, it is similar to a symmetric
matrix; indeed, if 
$$
Q: = \left( \begin{array}{cc}
I_{|\mathring{V}|} &  0 \\
0 & \frac{1}{2} I_{|\partial V|}
\end{array}  \right),
$$
then $Q^{1/2} \mathcal{L}_R Q^{-1/2}$ is symmetric, positive-definite, and has
the eigenvector $D^{1/2} Q^{1/2} \vec{1}$ of eigenvalue $0$. It follows that the
first nontrivial eigenvalue $\lambda_R$ of $\mathcal{L}_R$ satisfies
$$
\lambda_R := \inf_{x^\top D^{1/2} Q^{1/2} \vec{1} = 0} \frac{x^\top Q^{1/2}
\mathcal{L}_R Q^{-1/2} x}{x^\top x}.
$$
Let $E(U,W) := \left\{\{u,w\} \in E : u \in U, w \in W \right\}$, that is,
$E(U,W)$ is the set of edges between $U$ and $W$. We define a measure $m(U,W)$
on this set of edges by
$$
m(U,W) = |E(U,W)| - \frac{1}{2} |E(U \cap \partial V,
W \cap \partial V)|,
$$
and we define the  volume $\vol(U)$ of $U \subseteq V$ by
$$
\vol(U) := \sum_{u \in U} m(\{u\},V).
$$
 The following theorem is our main result.
\begin{theorem} \label{thm1}
Suppose that $G = (V,E)$ is a graph with a designated set of boundary vertices
$\partial V \subseteq V$, and define the Cheeger constant $h_R$ by
\begin{equation} \label{hr}
h_R :=  \min_{S \subseteq V} \frac{m(S,V \setminus S)}{\min\{
\vol(S), \vol(V \setminus S) \}}.
\end{equation}
Then,
$$
\sqrt{2 \lambda_R} \ge h_R \ge \lambda_R/2,
$$
where $\lambda_R$ is the first nontrivial eigenvalue of $\mathcal{L}_R$.
\end{theorem}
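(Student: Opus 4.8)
The plan is to reduce the whole statement to the classical Cheeger inequality for an auxiliary weighted graph, after first making the Rayleigh quotient defining $\lambda_R$ fully concrete. Using the diagonal matrix $Q$ from Section \ref{mainresult}, one checks directly that $QR$ is symmetric (its off--diagonal boundary blocks become $A_{12}$ and $A_{12}^\top$), so $QL_R = QD - QR$ is symmetric as well. Substituting $x = Q^{1/2}D^{1/2}f$ into the variational formula for $\lambda_R$ and using that $Q$ and $D$ are diagonal and hence commute, I would rewrite
$$
\lambda_R = \inf_{\sum_{v\in V} m(\{v\},V)\,f(v) = 0} \frac{f^\top Q L_R f}{\sum_{v\in V} m(\{v\},V)\,f(v)^2}.
$$
The point of the substitution is that $x^\top x$ becomes the $m$-weighted norm $\sum_v m(\{v\},V) f(v)^2$ and the orthogonality constraint $x^\top D^{1/2}Q^{1/2}\vec 1 = 0$ becomes $\sum_v m(\{v\},V) f(v) = 0$.

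The crux is then to read off both numerator and denominator combinatorially. Assigning to each edge $e = \{u,v\}$ the weight $c_e := 1$ when $e$ has an interior endpoint and $c_e := 1/2$ when both endpoints lie in $\partial V$, a direct expansion of $f^\top QL_R f = f^\top QD f - f^\top QR f$ collapses to the weighted Dirichlet energy $\sum_{e=\{u,v\}} c_e (f(u)-f(v))^2$. The same weights reproduce the quantities in the statement: $m(U,W) = \sum_{e\in E(U,W)} c_e$, and, crucially, $m(\{v\},V) = \sum_{u\sim v} c_{uv}$ is exactly the weighted degree of $v$. Hence $\lambda_R$ is the first nontrivial eigenvalue of the normalized Laplacian of the weighted graph $(V,E,c)$ whose vertex measure is the weighted degree, and $h_R$ in \eqref{hr} is precisely its weighted Cheeger constant. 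I expect this identification to be the main obstacle: once it is in place, Theorem \ref{thm1} is just the classical weighted Cheeger inequality, and the remaining work is to reproduce its two halves with the weights $c_e$.

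For the bound $h_R \ge \lambda_R/2$ I would let $S$ attain the minimum in \eqref{hr}, assume $\vol S \le \vol(V\setminus S)$, and test the quotient above with $f = \mathbf 1_S - (\vol S/\vol V)\,\vec 1$, which satisfies the mean-zero constraint. The numerator collapses to $\sum_{e\in E(S,V\setminus S)} c_e = m(S,V\setminus S)$, since only cut edges contribute and there $|f(u)-f(v)| = 1$; the denominator evaluates to $\vol S\cdot\vol(V\setminus S)/\vol V$; and the elementary bound $\vol(V\setminus S)/\vol V \ge 1/2$ yields $\lambda_R \le 2\,m(S,V\setminus S)/\vol S = 2h_R$.

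For the bound $\sqrt{2\lambda_R} \ge h_R$ I would run the standard coarea argument on the weighted graph. Starting from the optimizing eigenfunction $f$, I subtract a weighted median so the superlevel and sublevel sets each have volume at most $\vol V/2$, and pass to whichever of the positive/negative parts keeps the energy ratio at most $\lambda_R$ (this follows from expanding the shifted energy and the edgewise inequality $(g(u)-g(v))^2 \ge (g_+(u)-g_+(v))^2 + (g_-(u)-g_-(v))^2$); call the resulting nonnegative function $g$, supported on a set of volume at most $\vol V/2$. Applying the weighted Cauchy--Schwarz inequality
$$
\sum_{e=\{u,v\}} c_e\,|g(u)^2 - g(v)^2| \le \Big(\sum_{e=\{u,v\}} c_e (g(u)-g(v))^2\Big)^{1/2}\Big(\sum_{e=\{u,v\}} c_e (g(u)+g(v))^2\Big)^{1/2},
$$
together with $(g(u)+g(v))^2 \le 2(g(u)^2 + g(v)^2)$, bounds the right-hand factor by the energy and the $m$-norm of $g$. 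Writing the left-hand side as an integral over thresholds $t$ of the cut weight $m(S_t, V\setminus S_t)$ with $S_t = \{g > t\}$, and using $m(S_t,V\setminus S_t) \ge h_R\,\vol(S_t)$ (valid since $\vol S_t \le \vol V/2$), delivers $h_R \le \sqrt{2\lambda_R}$. The only step requiring care beyond the textbook argument is the bookkeeping in this weighted Cauchy--Schwarz/coarea computation, precisely because the boundary--boundary edges carry weight $1/2$ rather than $1$.
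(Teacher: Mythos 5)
Your proposal is correct, and it organizes the proof around a genuinely different idea than the paper. The paper never passes to an auxiliary weighted graph: it proves the matrix identity $QL_R = L - \tfrac{1}{2}L_\partial$ (your edge-weight observation in disguise) and then reruns the entire Cheeger machinery by hand in two lemmas --- the easy direction via essentially your test vector, a shifted indicator $\chi_S$ made orthogonal to $D^{1/2}Q^{1/2}\vec{1}$, and the hard direction via an ordering of the vertices by eigenfunction value, positive/negative parts taken about the volume-median vertex $v_p$, the inequality $\tfrac{a+b}{c+d}\ge \min\{\tfrac{a}{c},\tfrac{b}{d}\}$, Cauchy--Schwarz, and a discrete telescoping/rearrangement over the sweep sets $S_i$ where you use a coarea integral over thresholds (these last two are the same computation in different clothing). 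Your reduction --- weights $c_e=\tfrac12$ exactly on boundary--boundary edges together with the verification that $d_v q_v = m(\{v\},V)$ is the $c$-weighted degree, which you correctly flag as the crucial consistency check --- exhibits $\lambda_R$ and $h_R$ as the normalized-Laplacian eigenvalue and Cheeger constant of the weighted graph $(V,E,c)$, so the theorem becomes an instance of the classical weighted Cheeger inequality (as in \cite{Chung1996}), and it makes automatic a point the paper must argue separately: the nonnegativity of the split numerators $a,b$ after subtracting the boundary terms, which in your formulation are manifestly sums of nonnegative quantities $c_e(\cdot)^2$. What the paper's hands-on version buys is self-containedness (no appeal to a weighted reference) and an explicit sweep-cut bound $\alpha \le \sqrt{2\lambda_R}$ used in its remarks on constructive cuts, though your threshold argument yields the same constructive cut. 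For a final write-up you should make two small points explicit: at least one of $g_+, g_-$ is nonzero (so the passage to the better Rayleigh quotient is legitimate), and $S$ in \eqref{hr} implicitly ranges over proper nonempty subsets so that the denominators $\vol(S)$, $\vol(V\setminus S)$ are positive.
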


Recall that the standard Cheeger inequality is constructive in the
sense that a cut that achieves the upper bound on the Cheeger constant can be
determined from the eigenfunction corresponding to the first eigenvalue of the
normalized graph Laplacian $\mathcal{L}$, see \cite{Alon1986,Cheeger1970}.
Specifically, a partition that achieves the upper bound can be determined by
dividing the vertices into two groups based on if the value of the first
eigenvector is more or less than some threshold; for a detailed exposition see
for example \cite{Chung1997,Chung2007}.  Similarly, the result of Theorem
\ref{thm1} is constructive in the sense that a cut which achieves the upper
bound on $h_R$ can be determined from the eigenvector $\psi_R$ of
$\mathcal{L}_R$ that corresponds to $\lambda_R$. In the following remark, we
present an example where the cut arising from $\psi_R$ differs significantly
from the cut arising from the first eigenvector $\psi$ of the standard
normalized graph Laplacian $\mathcal{L}$.

\begin{remark} 
Graph cuts arising from $\psi_R$ can differ significantly from graph
cuts arising from $\psi$. Indeed, on the left of Figure \ref{fig04} we
illustrate a graph whose vertices are colored by greyscale values  proportional
to $\psi$. On the right of Figure \ref{fig04} we illustrate the same graph
except several vertices have been designated as boundary vertices (indicated by
squares) and the color of the vertices is proportional to $\psi_R$. Observe that
$\psi$ suggests cutting the graph by a vertical line into two equal parts, while
$\psi_R$ suggests cutting the graph by a horizontal line into two equal parts.
\begin{figure}[h!] \centering
\begin{tabular}{cc}
\includegraphics[height=.3\textwidth]{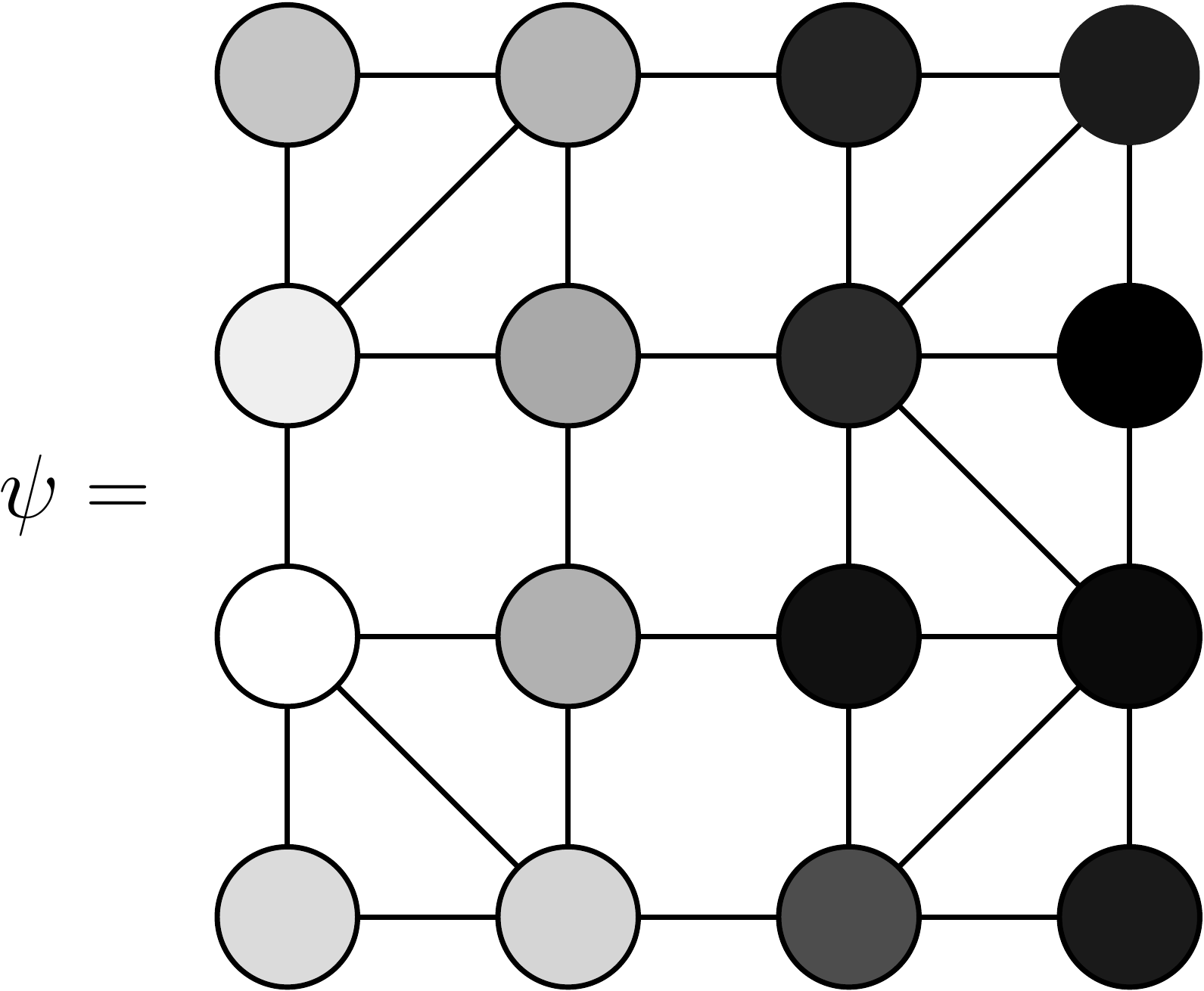} 
\includegraphics[height=.3\textwidth]{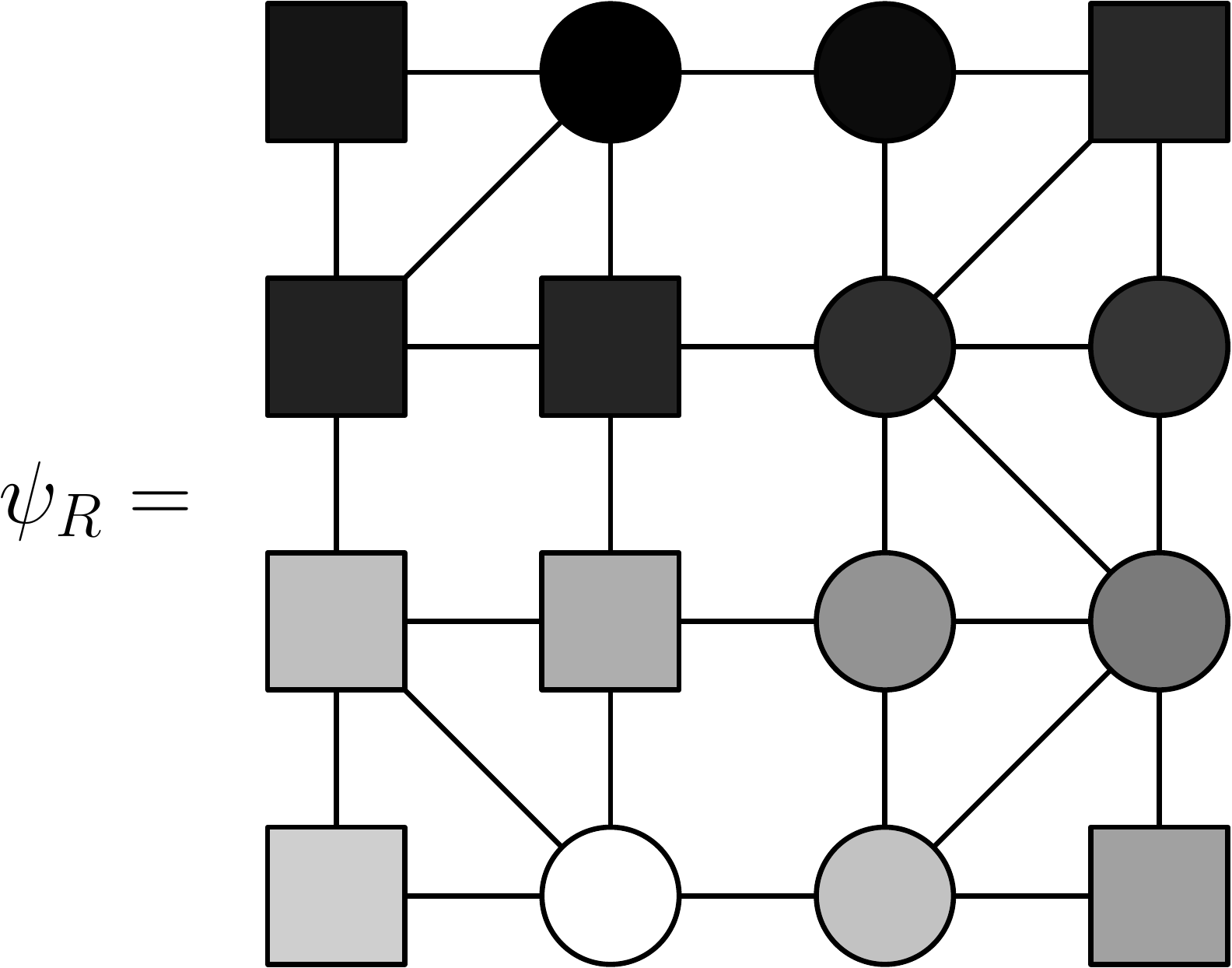} \quad &  \quad
\end{tabular}
\caption{The same graph with vertices colored proportional to $\psi$ (left)
and colored proportional to $\psi_R$ (right), where the squares in the right
graph denote boundary vertices.}
\label{fig04}
\end{figure}

The fact that $\psi_R$ suggests a horizontal cut of the graph is illustrative of
Theorem \ref{thm1}. Indeed, it is straightforward to check that the horizontal
cut suggested by $\psi_R$ minimizes the cut measure $m(S,V \setminus S)/(
\vol(S), \vol(V \setminus S) \})$ from \eqref{hr}. In contrast, the vertical cut
suggested by $\psi$ minimizes the standard cut measure, which is equivalent to
the measure $m(S,V \setminus S)/( \vol(S), \vol(V \setminus S) \})$ in the case
that all vertices are interior vertices.  Of course, Theorem \ref{thm1} only
guarantees that the measure of the cut arising from the eigenvector $\psi_R$  is
an upper bound for $h_R$ with value at most $\sqrt{2 \lambda_R}$; however, in
this simple example the cut arising from $\psi_R$ actually obtains this minimum.
\end{remark}

\begin{remark}
Here we visualize the first eigenfunction $\psi_R$ of the reflected Neumann
graph Laplacian $\mathcal{L}_R$ on a classic barbell shaped graph, see
Figure \ref{fig05}.
\begin{figure}
\centering
\includegraphics[width=.5\textwidth]{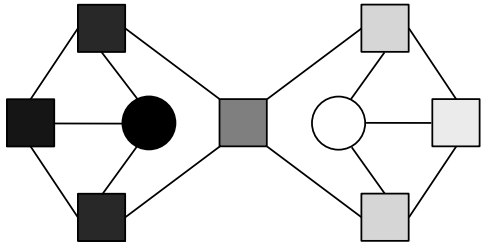} 
\caption{A barbell shaped graph whose vertices are colored proportional to
$\psi_R$, where squares in the graph denote boundary vertices.}
\label{fig05}
\end{figure}
Observe that in Figure \ref{fig05} the maximum and minimum value of the
eigenvector occur at an interior vertex. This feature of the eigenvectors is
interesting in the context of spectral clustering, where extreme values of the
eigenvectors often correspond to the center of clusters.

\end{remark}

\subsection{Future Directions}
One future direction for this work is the problem of selecting boundary vertices
in a principled way.  How the boundary is selected may depend on the application
at hand. In a social network graph, boundary vertices could correspond to
individuals with many connections outside the network.  In the context of
manifold learning, where the vertices of the graph are points in $\mathbb{R}^n$,
boundary vertices could be selected based on the number of points within some
$\varepsilon$-neighborhood of each vertex. On the other hand,
when a graph is given by sampling from a pre-defined manifold with boundary,
vertices selected from some collar neighborhood of the boundary could be
designated as boundary vertices.

Another future direction arises from generalizing the setup under which our work
was done.  Our graph doubling procedure inputs a graph with boundary and outputs
a larger graph, containing the original graph as an induced subgraph, which has
a special $Z_2$ symmetry. Could similar Cheeger results be proven for other
reflection procedures? For example, what if $n-1$ copies of the interior
vertices were attached, instead of only 1?

Finally, we note a connection between the doubled graph (defined in \S
\ref{intro}) and numerical analysis, that may motivate a direction for future
study. Recall that for a path graph $P_n$ the eigenfunctions of the reflected
Neumann Laplacian $L_R$ are of the form $\psi_k(j) = \cos(\pi j k/(n-1))$, see
Remark \ref{rmk1}. These Neumann eigenvectors are precisely the basis vectors
for the Discrete Cosine Transform (DCT) Type I, as classified in
\cite{Strang1999}. The DCT Type II, which has basis vectors $\psi_k(j) = \cos(
\pi (j+1/2) k/n)$ is also important in numerical analysis; it could be
interesting to develop a graph doubling procedure whose Neumann eigenvectors on
the path graph are these vectors.

\section{Proof of Main Result}
\subsection{Summary}
The proof of Theorem
\ref{thm1} is divided into two lemmas: first, in Lemma \ref{lem2} we show that
$\lambda_R \le 2 h_R$, and  second, in Lemma \ref{lem3} we show that $h_R^2/2 \le
\lambda_R$. The structure of our argument is similar to classical
Cheeger inequality proofs, see \cite{Chung1997,Chung1996}.
\subsection{Proof of Theorem \ref{thm1}}
\begin{lemma}[Trivial direction] \label{lem2}
We have 
$$
\lambda_R \le 2 h_R.
$$
\end{lemma}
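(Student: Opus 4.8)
The plan is to prove this easy (``trivial'') direction exactly as in the classical Cheeger inequality: exhibit a single test function, built from an optimal cut, whose Rayleigh quotient is at most $2h_R$. The first task is to recast the variational formula for $\lambda_R$ in a purely combinatorial form. Substituting $x = Q^{1/2}D^{1/2}g$ and using that $D$ and $Q$ are diagonal (hence commute), the numerator $x^\top Q^{1/2}\mathcal{L}_R Q^{-1/2}x$ collapses to $g^\top Q L_R g$, the denominator $x^\top x$ becomes $g^\top D Q g$, and the orthogonality constraint $x^\top D^{1/2}Q^{1/2}\vec{1}=0$ becomes $g^\top DQ\vec{1}=0$, so that
$$
\lambda_R = \inf_{g^\top DQ\vec{1} = 0}\frac{g^\top Q L_R g}{g^\top DQ g}.
$$

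The key structural observation I would record is that $Q L_R$ is the genuine (symmetric) Laplacian of the weighted graph on $V$ in which every edge incident to an interior vertex has weight $1$ and every boundary--boundary edge has weight $\tfrac12$. Indeed $QR = \begin{pmatrix} A_{11} & A_{12} \\ A_{12}^\top & \tfrac12 A_{22}\end{pmatrix}$ is symmetric and is exactly the weighted adjacency matrix, while $DQ = \diag\big(m(\{i\},V)\big)$ is the matching weighted degree matrix; both identities I would verify directly from the definitions of $m$ and $\vol$, paying attention to the boundary factor of $\tfrac12$. Consequently $g^\top Q L_R g = \sum_{\{i,j\}\in E} w_{ij}(g_i-g_j)^2$, the cut measure $m(U,W)$ is the total weight of edges between $U$ and $W$, and $\vol(U) = \sum_{i\in U}(DQ)_{ii}$ is the weighted volume. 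With this dictionary the claim is precisely the easy half of the weighted Cheeger inequality.

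Given the reduction, let $S$ attain the minimum in the definition of $h_R$ and assume without loss of generality $\vol(S)\le\vol(\bar S)$, where $\bar S := V\setminus S$. I would take $g$ equal to $1/\vol(S)$ on $S$ and $-1/\vol(\bar S)$ on $\bar S$. The constraint holds since $\sum_i (DQ)_{ii}g_i = \vol(S)/\vol(S) - \vol(\bar S)/\vol(\bar S) = 0$. Because $g$ is constant on $S$ and on $\bar S$, only cut edges contribute to the numerator, giving $g^\top Q L_R g = m(S,\bar S)\big(1/\vol(S) + 1/\vol(\bar S)\big)^2$, while the denominator evaluates to $g^\top DQ g = 1/\vol(S) + 1/\vol(\bar S)$. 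Hence the Rayleigh quotient equals $m(S,\bar S)\big(1/\vol(S)+1/\vol(\bar S)\big)$, and bounding $1/\vol(S)+1/\vol(\bar S)\le 2/\vol(S)$ via $\vol(S)\le\vol(\bar S)$ yields $\lambda_R \le 2\,m(S,\bar S)/\vol(S) = 2h_R$.

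The only genuine content lies in the reduction of the first two paragraphs: recognizing that conjugation by $Q$ symmetrizes $L_R$ into a true weighted Laplacian, and that the resulting edge weights are exactly those encoded by $m$ and $\vol$. The main obstacle, such as it is, is bookkeeping these identities correctly (especially the boundary--boundary factor of $\tfrac12$); once the weighted-graph picture is established, the test-function computation is the standard textbook argument and presents no difficulty.
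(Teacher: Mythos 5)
Your proof is correct and follows essentially the same route as the paper: the identification of $Q L_R$ with a weighted Laplacian (edge weight $\tfrac12$ on boundary--boundary edges) is exactly the paper's decomposition $Q L_R = L - \tfrac12 L_\partial$, and your two-valued test vector $g = \chi_S/\vol(S) - \chi_{V\setminus S}/\vol(V\setminus S)$ is, up to a scalar multiple, the same as the paper's projected indicator $Q^{1/2}D^{1/2}\chi_S - \frac{\chi_S^\top DQ\vec{1}}{\vec{1}^\top DQ\vec{1}}D^{1/2}Q^{1/2}\vec{1}$, with the final bound $1/\vol(S)+1/\vol(V\setminus S)\le 2/\min\{\vol(S),\vol(V\setminus S)\}$ matching the paper's last inequality. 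The only cosmetic difference is that the paper runs the estimate for every $S\subseteq V$ and then minimizes, while you fix an optimal $S$ at the outset; both yield $\lambda_R\le 2h_R$.
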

\begin{proof}[Proof of Lemma \ref{lem2}]
Recall that 
$$
\mathcal{L}_R := D^{-1/2} Q^{1/2} L_R Q^{-1/2} D^{-1/2}.
$$
First, we observe that $Q L_R$ can be written as 
$$
Q L_R = L - \frac{1}{2} L_\partial,
$$
where
$$
L = 
 \left( \begin{array}{cc}
\diag( A_{11} \vec{1} + A_{12} \vec{1})
- A_{11} & -A_{12} \\ 
-A_{12}^\top  & \diag( A_{12}^\top \vec{1} + A_{22} \vec{1} )
 - A_{22} 
\end{array} \right),
$$
and
$$
L_\partial := \left( \begin{array}{cc}
0 & 0 \\
0  & \diag(A_{22}\vec{1}) - A_{22}
\end{array} \right).
$$
Observe that $L$ is the standard graph Laplacian of $G$, while $L_\partial$ is the graph
Laplacian of the vertex induced subgraph $G[\partial V]$.  Fix a subset $S
\subseteq V$, and let $\chi_S$ be the indicator function for $S$. Define
$$
x := Q^{1/2} D^{1/2} \chi_S - \frac{\chi_S^\top D Q \vec{1}}{\vec{1}^\top D Q \vec{1}} D^{1/2}
Q^{1/2} \vec{1}.
$$
By construction,  we have $x^\top D^{1/2} Q^{1/2} \vec{1} = 0$, and it follows that
\begin{eqnarray*}
\lambda_{N} &\le&  \frac{x^\top D^{-1/2} Q^{1/2} L_R Q^{-1/2} D^{-1/2} x}{x^\top
x} \\ 
&=& \frac{\chi_S^\top Q L_R \chi_S}{ \chi_S^\top D Q \chi_S \left(
\vec{1} - \frac{\chi_S^\top D Q \chi_S}{\vec{1}^\top D Q 1}\right)} \\
&=& \frac{\chi_S^\top (L - \frac{1}{2} L_\partial) \chi_S \left( \vec{1}^\top D Q \vec{1}
\right)}{ \left( \chi_S^\top D Q \chi_S^\top\right) \left( \chi_{V \setminus
S}^\top D Q \chi_{V \setminus S} \right)} \\
&\le& \frac{2 \cdot \chi_S^\top (L - \frac{1}{2} L_\partial) \chi_S }{\min \left\{ \left(
\chi_S^\top D Q \chi_S^\top\right), \left( \chi_{V \setminus S}^\top D Q
\chi_{V \setminus S} \right) \right\}}  \\
&=& \frac{2 \cdot m(S,V \setminus S)}{\min\{
\vol(S), \vol(V \setminus S) \}}.
\end{eqnarray*}
Since this inequality holds for all subsets $S \subseteq V$, we conclude that
$\lambda_R \le 2 h_R$, as was to be shown. 
\end{proof}

\begin{lemma}[Nontrivial direction] \label{lem3}
We have
$$
\lambda_R \ge \frac{h_R^2}{2}.
$$
\end{lemma}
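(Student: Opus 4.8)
The plan is to follow the classical Cheeger inequality argument, adapting the standard proof of the hard direction (see \cite{Chung1997,Chung1996}) to account for the weighting by $Q$ and the measure $m$. Let $\psi_R$ be the eigenvector of $\mathcal{L}_R$ achieving $\lambda_R$, and let $g := Q^{-1/2}D^{-1/2}\psi_R$ so that $g$ is orthogonal to $\vec{1}$ with respect to the $DQ$ inner product; concretely $g^\top D Q \vec{1} = 0$. The Rayleigh quotient characterization gives
\begin{equation} \label{eq:rayleigh}
\lambda_R = \frac{g^\top Q L_R \, g}{g^\top D Q \, g} = \frac{g^\top (L - \tfrac{1}{2}L_\partial) g}{g^\top D Q \, g},
\end{equation}
using the identity $QL_R = L - \tfrac{1}{2}L_\partial$ established in Lemma \ref{lem2}. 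The numerator is a sum over edges of $G$ of terms $(g(u)-g(v))^2$, with the boundary-boundary edges down-weighted by the factor $\tfrac{1}{2}$ coming from $L_\partial$; this matches exactly the edge measure $m$.

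The key step is the standard truncation-and-ordering trick. First I would reduce to a nonnegative function: split $g$ into its positive and negative parts and argue that one of them, call it $h$ (supported on a set of volume at most $\tfrac{1}{2}\vol(V)$), satisfies a Rayleigh quotient bound
$$
\lambda_R \ge \frac{\sum_{\{u,v\}} w_{uv}(h(u)-h(v))^2}{\sum_u h(u)^2 \, m(\{u\},V)},
$$
where $w_{uv}$ are the edge weights incorporating the $\tfrac{1}{2}$ factor on boundary-boundary edges. This step requires care because of the asymmetry of $QL_R$ and the $Q$-weighting in the volume, so I would verify the inequality by expanding the quadratic form and discarding a cross term of favorable sign, as in the symmetric case. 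Next I would apply the Cauchy--Schwarz inequality in the form
$$
\left(\sum_{\{u,v\}} w_{uv}|h(u)^2 - h(v)^2|\right)^2 \le \left(\sum_{\{u,v\}} w_{uv}(h(u)-h(v))^2\right)\left(\sum_{\{u,v\}} w_{uv}(h(u)+h(v))^2\right),
$$
and bound the second factor on the right by $2\sum_u h(u)^2\, m(\{u\},V)$.

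The heart of the argument, and where the Cheeger constant $h_R$ enters, is rewriting the left-hand factor as a co-area sum. Ordering the vertices so that $h$ is nonincreasing and summing over threshold cuts $S_t := \{u : h(u) > t\}$, I would use the identity $\sum_{\{u,v\}} w_{uv}|h(u)^2 - h(v)^2| = \int_0^\infty m(S_t, V\setminus S_t)\, d(t^2)$ and bound each boundary measure below using $m(S_t, V\setminus S_t) \ge h_R \vol(S_t)$, valid since each $S_t$ has volume at most $\tfrac{1}{2}\vol(V)$ by the choice of $h$. Combining this with $\vol(S_t) = \sum_{u \in S_t} m(\{u\},V)$ and reassembling the integral recovers $h_R \sum_u h(u)^2\, m(\{u\},V)$, and substituting back into Cauchy--Schwarz yields $\lambda_R \ge h_R^2/2$ after cancelling the denominator. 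The main obstacle I anticipate is bookkeeping the $\tfrac{1}{2}$-weight on boundary-boundary edges consistently through the co-area formula and verifying that the measure $m$ interacts correctly with both the quadratic form $L - \tfrac{1}{2}L_\partial$ and the $Q$-weighted volume; once the measure $m$ is shown to be the common thread tying the numerator, the co-area sum, and the Cheeger constant together, the classical machinery goes through with the constant $2$ arising from the boundary-weighted $(h(u)+h(v))^2$ bound.
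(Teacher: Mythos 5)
Your proposal is correct and follows essentially the same route as the paper's proof: the identity $QL_R = L - \tfrac{1}{2}L_\partial$, splitting the minimizing vector at a volume-median threshold into positive and negative parts and selecting one via the $\min\{a/c,b/d\}$ trick, Cauchy--Schwarz on the resulting Rayleigh quotient, and a co-area/telescoping argument in which the measure $m$ ties the $\tfrac{1}{2}$-weighted quadratic form to the Cheeger constant $h_R$. The only differences are cosmetic bookkeeping: you bound $\sum_{u \sim v} w_{uv}(h(u)+h(v))^2 \le 2\sum_u h(u)^2\, m(\{u\},V)$ directly where the paper retains a factor $(2-\lambda)$ before discarding it, and you phrase the level-set step as a continuous co-area integral rather than the paper's discrete telescoping and rearrangement over the sets $S_i$.
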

\begin{proof}[Proof of Lemma \ref{lem3}]
Recall that 
$$
\lambda_R = \inf_{x^\top D^{1/2} Q^{1/2} \vec{1} = 0} \frac{x^\top
\mathcal{L}_R
x}{x^\top x} = \inf_{y^\top D Q \vec{1} = 0} \frac{y^\top Q L_R y}{y^\top Q D y}.
$$
Let $g$ be a vector satisfying
$$
\lambda_R = \frac{g^\top Q L_R g}{g^\top D Q g}, \quad \text{and} \quad
g^\top Q D \vec{1} = 0.
$$
Let $\{v_1,\ldots,v_n\} $ be an enumeration of the vertices $V$ so that
$g_{v_1} \le  ...\le g_{v_n}$, and set $S_j := \{v_1, ..., v_j\}$, for $j =
1,\ldots,n.$ Let $p$ be the largest integer such that $\vol(S_p) \le \vol(V)/2$,
that is,
$$
p := \max \left\{ j \in \{1,\ldots,n\} :\text{vol}(S_j) \leq \text{vol}(V)/2
\right\}.
$$
Let $g^+$ and $g^-$ denote the positive and negative parts of $g - g_{v_p}$,
respectively. That is,
$ g^+_v := \max \{g_v-g_{v_p}, 0 \}$
and
$g^-_v := \max \{g_{v_p}-g_v, 0\}.$
Let $u \sim v$ denote $\{u,v\} \in E$ and $q = \diag(Q).$ Then
\begin{eqnarray*}
\lambda_R &=&
\frac{g^\top(L - \frac{1}{2} L_\partial) g}{g^\top D Q g} \\
&= & \frac{\sum_{u \sim v} (g_u - g_v)^2 - \frac{1}{2} \sum_{\substack{ u \sim v
\\ u,v \in \partial V}} (g_u - g_v)^2}{\sum_v g_v^2 d_v q_v} \\
&\ge& \frac{\sum_{u \sim v} (g_u - g_v)^2 - \frac{1}{2} \sum_{\substack{ u \sim
v \\ u,v \in \partial V}} (g_u - g_v)^2 }{\sum_v (g(v)-g(v_p))^2d_v q_v},
\end{eqnarray*}
where the last inequality holds because we have increased the
denominator. From here,

\begin{equation} \label{eq1}
\lambda_R \geq \frac{\sum_{u \sim v} ((g^+_u - g^+_v)^2+(g^-_u-g^-_v)^2)
 - \frac{1}{2} \sum_{\substack{ u \sim v
\\ u,v \in \partial V}} ((g^+_u - g^+_v)^2+(g^-_u - g^-_v)^2)
}{\sum_v
((g^+_v)^2+(g^-_v)^2) d_v q_v},
\end{equation}


Recall that
\begin{equation} \label{eq2}
\frac{a+b}{c+d} \geq \text{min} \left\{\frac{a}{c}, \frac{b}{d} \right\},
\end{equation}

for any $a,b \geq 0$ and $c,d > 0$.  
From \eqref{eq1}, we can set $a = \sum_{u \sim v} (g^+_u - g^+_v)^2 - \sum_{\substack{ u \sim v
\\ u,v \in \partial V}} (g^+_u - g^+_v)^2,$ 
$b = \sum_{u \sim v} (g^-_u-g^-_v)^2 - \sum_{\substack{ u \sim v \\ u,v \in \partial V}} (g^-_u - g^-_v)^2,$
$c = \sum_v (g^+_v)^2 d_v q_v,$ and
$d = \sum_v (g^-_v)^2 d_v q_v.$
Observe that $a$ and $b$ are nonnegative. Indeed,
$$a = \sum_{\substack{ u \sim v \\ u \notin \partial V \ \text{or}\ v \notin \partial V}} (g^+_u - g^+_v)^2,$$
which has nonnegative summands, and a similar statement holds for $b.$

Without loss of generality, \eqref{eq2} implies that
$$
\lambda_R  \ge 
\frac{\sum_{u \sim v} (g^+_u - g^+_v)^2
 - \frac{1}{2} \sum_{\substack{ u \sim v
\\ u,v \in \partial V}} (g^+_u - g^+_v)^2
}{\sum_v
(g^+_v)^2 d_v q_v}. 
$$
To simplify notation in the following, let $f = g^+$. We begin by setting
$$
\lambda :=
\frac{\sum_{u \sim v} (f_u- f_v)^2
 - \frac{1}{2} \sum_{\substack{ u \sim v
\\ u,v \in \partial V}} (f_u- f_v)^2
}{\sum_v
f_v^2 d_v q_v}. 
$$
Multiplying the numerator and denominator by the same term gives
$$
\lambda = \frac{
\left(
\sum_{u \sim v} (f_u- f_v)^2 - \frac{1}{2} \sum_{\substack{ u \sim v \\ u,v \in
\partial V} } (f_u- f_v)^2 \right) \left(
\sum_{u \sim v} (f_u+ f_v)^2 - \frac{1}{2} \sum_{\substack{ u \sim v \\ u,v \in
\partial V} } (f_u+ f_v)^2 \right)
}{\left( \sum_v
f_v^2 d_v q_v \right) \left( \sum_{u \sim v} (f_u+ f_v)^2 - \frac{1}{2}
\sum_{\substack{ u \sim v \\ u,v \in
\partial V} } (f_u+ f_v)^2 \right)
}. 
$$
Applying the Cauchy-Schwarz inequality in the numerator gives
$$
\lambda \ge \frac{
\left(
\sum_{u \sim v} |f_u^2- f_v^2| - \frac{1}{2} \sum_{\substack{ u \sim v \\ u,v
\in \partial V} } |f_u^2- f_v^2| \right)^2 }{\left( \sum_v
f_v^2 d_v q_v \right) \left( \sum_{u \sim v} (f_u+ f_v)^2 - \frac{1}{2}
\sum_{\substack{ u \sim v \\ u,v \in \partial V} } (f_u+ f_v)^2 \right) }. 
$$
Next, we observe that
$$
\sum_{u \sim v} (f_u+ f_v)^2 - \frac{1}{2}
\sum_{\substack{ u \sim v \\ u,v \in \partial V} } (f_u+ f_v)^2  = \sum_v f_v^2
d_v q_v - \left( \sum_{u \sim v} (f_u- f_v)^2 - \frac{1}{2}
\sum_{\substack{ u \sim v \\ u,v \in \partial V} } (f_u- f_v)^2  \right),
$$
and thus it follows that
$$
\lambda \ge \frac{ \left( \sum_{u \sim v} |f_u^2- f_v^2| - \frac{1}{2}
\sum_{\substack{ u \sim v \\ u,v \in \partial V} } |f_u^2- f_v^2| \right)^2
}{\left( \sum_v f_v^2 d_v q_v \right)^2 \left(2 -\lambda \right) }. 
$$
We want to show that
$$
\sum_{u \sim v} |f_u^2- f_v^2| - \frac{1}{2} \sum_{\substack{ u \sim v \\ u,v
\in \partial V} } |f_u^2- f_v^2|  \geq \sum_{i=1}^n |f_{v_i}^2 -
f_{v_{i+1}}^2|  m(S_i,V \setminus S_i).
$$
We can write
$$
\sum_{u \sim v} |f_u^2- f_v^2| - \frac{1}{2} \sum_{\substack{ u \sim v \\ u,v
\in \partial V} } |f_u^2- f_v^2|  = \sum_{i=2}^n \sum_{j = 1}^{i-1} \left(
\chi_{E_{i,j}} - \frac{1}{2} \chi_{\partial_i} \chi_{\partial_j} \right)
(f_{v_i}^2-f_{v_j}^2),
$$
where 
$$\chi_{E_{i,j}} = \begin{cases}
1 \ \text{if}\ \{v_i, v_j\} \in E \\
0 \ \text{otherwise}
\end{cases}$$ 
is the indicator function for $\{v_i,v_j\} \in E$, and
$$\chi_{\partial_i} = \begin{cases}
1 \ \text{if}\  i \in \partial V \\
0 \ \text{otherwise}
\end{cases}$$ 
is the indicator function for $v_i \in \partial V$. Note
that we are justified in dropping the absolute value signs because $f_{v_i}^2$
is an increasing function of $i.$ Next we write $f_{v_i}^2 - f_{v_j}^2$ as a
telescoping series
$$
f_{v_i}^2 - f_{v_j}^2 = (f_{v_i}^2 - f_{v_{i-1}}^2)+(f_{v_{i-1}}^2 - f_{v_{i-2}}^2) + ... +
(f_{v_{j+1}}^2-f_{v_j}^2),
$$
and rearrange terms in the summation to conclude that
\begin{multline*}
 \sum_{i=2}^n \sum_{j = 1}^{i-1} \left( \chi_{E_{i,j}} - \frac{1}{2}
\chi_{\partial_i} \chi_{\partial_j} \right) (f_{v_i}^2-f_{v_j}^2) = \\
\sum_{l=1}^n \sum_{k=1}^n \sum_{j=1}^n \left( \left( \chi_{E_{j,k+l}} -
\frac{1}{2} \chi_{\partial_j} \chi_{\partial_{k+l}} \right)   \chi_{j\leq l}
\right) (f_{v_{l+1}}^2 - f_{v_l}^2),
\end{multline*}
where
$$\chi_{j \leq l} = \begin{cases}
1 \ \text{if}\  j \leq l \\
0 \ \text{otherwise}.
\end{cases}$$
Then, to complete this step, we note that
$$
\sum_{k=1}^n \sum_{j=1}^n \left(\left( \chi_{E_{j,k+l}} - \frac{1}{2}
\chi_{\partial_j} \chi_{\partial_{k+l}}\right) \chi_{j\leq l} \right) = m(S_l,V
\setminus S_l).
$$
Returning to our main sequence of inequalities for $\lambda$, we have
\begin{eqnarray*}
\lambda 
&\ge& \frac{(\sum_i |f_{v_i}^2 - f_{v_{i+1}}^2| m(S_i,V \setminus S_i))^2}{2(\sum_v f_v^2 d_v
q_v)^2} \\
&\geq& \frac{(\alpha \sum_{i=1}^n |f_{v_i}^2-f_{v_{i+1}}^2| \min \{
\text{vol}(S_i), \text{vol}(V\setminus S_i)\})^2}{2(\sum_u f(u)^2d_u q_v)^2},
\end{eqnarray*}
where 
$$
\alpha := \min_{1\leq i\leq n} \frac{m(S_i,V\setminus S_i)}{\min \{ \vol(S_i),
\vol(V\setminus S_i)\}} .
$$
Since $f_{v_i}^2$ is nondecreasing, a rearrangement of the numerator
of the previous expression gives
$$
\lambda \ge \frac{\alpha^2}{2} \frac{(\sum_i
(f_{v_i}^2 |\min\{\vol(S_i),\text{vol}(V\setminus S_i)\}
-\min\{\vol(S_{i+1}),\text{vol}(V \setminus S_{i+1})\}|))^2} {(\sum_u
f(u)^2d_u q_u)^2}.
$$
It follows that
$$
\lambda_R \ge \lambda \ge \frac{\alpha^2}{2} \frac{(\sum_i f_{v_i}^2d_{v_i}
q_{v_i})^2}{(\sum_u f_{u}^2 d_u q_u)^2} = \frac{\alpha^2}{2} \ge
\frac{h_R^2}{2}, 
$$
which completes the proof.
\end{proof}

\subsection*{Acknowledgements}
We thank the referees for their helpful comments. This research was supported by
Summer Undergraduate Math Research at Yale (SUMRY) 2018. NFM was supported in
part by NSF DMS-1903015.


\begin{thebibliography}{0}

\bibitem{Alon1986}
N.~Alon.
\newblock Eigenvalues and expanders. 
\newblock {\em Combinatorica}, 6 (1986): 86--96.

\bibitem{Cheeger1970}
J.~{Cheeger}.
\newblock A lower bound for the smallest eigenvalue of the Laplacian.
\newblock {\em Problems in Analysis}, R. C. Gunning, editor, Princeton Univ.
Press, (1970): 195-199.

\bibitem{Chung1997}
F.~{Chung}. {\em Spectral Graph Theory}. CBMS Regional Conference Series in
Mathematics, No. 92, American Mathematical Society, 1997.

\bibitem{Chung2007}
F.~{Chung}.
\newblock Four Cheeger-type inequalities for graph partitioning algorithms.
{\em Proc. ICCM}, 2 (2007): 751--772.

\bibitem{Chung1996}
F.~{Chung}.
\newblock Laplacians of graphs and Cheeger's inequalities.  {\em Combinatorics,
Paul Erdos is eighty}, 2 (1996): 157--172.

\bibitem{ChungOden2000}
F.~{Chung} and K.~{Oden}.
\newblock Weighted Graph Laplacians and Isoperimetric Inequalities.
\newblock {\em Pac. J. Appl. Math.} 192, no. 2 (2000): 257--273.

\bibitem{CoifmanLafon2006}
R.~R.~Coifman and S.~S.~Lafon.
\newblock Diffusion maps.
\newblock {\em Appl. Comput. Harmon. Anal.}, 21, no. 1 (2006): 5--30.


\bibitem{HuaHuang2018}
B.~{Hua} and Y.~{Huang}.
\newblock Neumann Cheeger Constants on Graphs.
\newblock {\em J. Geom. Anal.}, 28 (2018): 2166--2184.

\bibitem{Strang1999}
G.~{Strang}.
\newblock The Discrete Cosine Transform.
\newblock {\em SIAM Rev.}, 41 (1999): 135--147.

\end{thebibliography}
\end{document}